\newtheorem{theorem}{Theorem}[section]
\newtheorem{lemma}[theorem]{Lemma}
\author{\ \\ \\
Vikram Kamat\thanks{\texttt{vkamat@asu.edu}}\\
{\small School of Mathematical and Statistical Sciences}\\
{\small Arizona State University, Tempe, Arizona 85287-1804}\\ \\ \\
}
\title{Stability analysis for $k$-wise intersecting families}
\begin{document}

\maketitle

\newpage

\begin{abstract}
We consider the following generalization of the seminal Erd\H{o}s-Ko-Rado theorem, due to Frankl \cite{fr}. For some $k\geq 2$, let $\mathcal{F}$ be a $k$-wise intersecting family of $r$-subsets of an $n$ element set $X$, i.e. for any $F_1,\ldots,F_k\in \mathcal{F}$, $\cap_{i=1}^k F_i\neq \emptyset$. If $r\leq \dfrac{(k-1)n}{k}$, then $|\mathcal{F}|\leq {n-1 \choose r-1}$. We prove a stability version of this theorem, analogous to similar results of Dinur-Friedgut, Keevash-Mubayi and others for the Erd\H{o}s-Ko-Rado theorem. The technique we use is a generalization of Katona's circle method, initially employed by Keevash, which uses expansion properties of a particular Cayley graph of the symmetric group.

\noindent{\bf Key words.}
intersection theorems, stability.
\end{abstract}

\pagebreak

\section{Introduction}
For a positive integer $n$, let $[n]=\{1,2,\ldots,n\}$. For positive integers $i$ and $j$ with $i\leq j$, let $[i,j]=\{i,i+1,\ldots,j\}$ ($[i,j]=\emptyset$ if $i>j$). Similarly let $(i,j]=\{i+1,\ldots,j\}$, which is empty if $i+1>j$. The notations $(i,j)$ and $[i,j)$ are similarly defined. Let ${[n] \choose r}$ be the family of all $r$-subsets of $[n]$. For $\mathcal{F}\subseteq {[n] \choose r}$ and $v\in [n]$, let $\mathcal{F}(v)=\{F\in \mathcal{F}:v\in F\}$, called a \textit{star} in $\mathcal{F}$, centered at $v$. A family $\mathcal{F}\subseteq {[n] \choose r}$ is called \textit{intersecting} if for any $A,B\in \mathcal{F}$, $A\cap B\neq \emptyset$. Similarly, call $\mathcal{F}\subseteq {[n] \choose r}$ $k$-wise intersecting if for any $F_1,\ldots, F_k\in \mathcal{F}$, $\bigcap_{i=1}^k F_i\neq \emptyset.$ Frankl \cite{fr} proved the following theorem for $k$-wise intersecting families.
\begin{theorem}[Frankl]\label{frankwise}
Let $\mathcal{F}\subseteq {[n] \choose r}$ be $k$-wise intersecting. If $r\leq \dfrac{(k-1)n}{k}$, then $|\mathcal{F}|\leq {n-1 \choose r-1}$.
\end{theorem}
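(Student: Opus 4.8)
The plan is to prove Theorem~\ref{frankwise} via Katona's circle method. Fix a uniformly random cyclic ordering $\sigma$ of $[n]$, and let $\mathcal{A}_\sigma\subseteq\mathcal{F}$ consist of those $F\in\mathcal{F}$ that form an \emph{arc} of $\sigma$, i.e. a set of $r$ cyclically consecutive elements. A fixed $r$-set is an arc of $\sigma$ with probability $r!(n-r)!/(n-1)!$, so by linearity of expectation $\mathbb{E}|\mathcal{A}_\sigma|=|\mathcal{F}|\cdot r!(n-r)!/(n-1)!$. Hence it suffices to show that \emph{every} cyclic ordering satisfies $|\mathcal{A}_\sigma|\le r$, since this yields $|\mathcal{F}|\le r\cdot(n-1)!/\bigl(r!(n-r)!\bigr)=\binom{n-1}{r-1}$. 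So the whole theorem reduces to the following purely cyclic statement: if $\mathcal{A}$ is a $k$-wise intersecting family of $r$-arcs on the cycle $\mathbb{Z}_n$ and $r\le(k-1)n/k$, then $|\mathcal{A}|\le r$.

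I would prove this arc statement by contradiction, assuming $|\mathcal{A}|\ge r+1$, after passing to complements. Writing $s=n-r$, the sets $\overline{A}$ for $A\in\mathcal{A}$ are arcs of length $s$; the $k$-wise intersecting hypothesis says precisely that \emph{no} $k$ of them cover $\mathbb{Z}_n$; and the inequality $r\le(k-1)n/k$ rewrites as $ks\ge n$. The left endpoints of these complementary arcs form a set $L\subseteq\mathbb{Z}_n$ with $|L|\ge r+1=n-s+1$, so $U:=\mathbb{Z}_n\setminus L$ has $|U|\le s-1$. It therefore remains to show the covering claim: whenever $|U|\le s-1$ and $ks\ge n$, some $k$ arcs of length $s$ with left endpoints in $L$ cover $\mathbb{Z}_n$ — contradicting $k$-wise intersection.

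For the covering claim I would build the $k$ arcs greedily: start at some $p_1\in L$, and having chosen $p_i$ let $p_{i+1}$ be the \emph{largest} element of $L$ in the window $[p_i+1,p_i+s]$, which is nonempty since the window has $s>|U|$ points. Consecutive arcs overlap, so after $k$ steps the arcs cover the interval $[p_1,p_k+s-1]$, of length $ks-\sum_i c_i$, where $c_i\ge 0$ is the deficit $p_i+s-p_{i+1}$. The key observation is that this deficit is witnessed by a block of $c_i$ consecutive points of $U$ inside $(p_{i+1},p_i+s]$, and that these blocks are pairwise disjoint — one checks that the block at step $i+1$ begins strictly above $p_i+s$ — so $\sum_i c_i\le|U|\le s-1$; together with $ks\ge n$ this covers $\mathbb{Z}_n$ except for a slack of at most $s-1$ in the covered length. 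The remaining, and main, technical point is to close this slack by choosing the starting arc (equivalently, the placement of the $k$ arcs) so that the ``seam'' falls in a convenient place: for instance, when $s\mid n$ one simply takes the $k=n/s$ arcs whose left endpoints lie in a residue class modulo $s$ disjoint from $U$, which exists because $U$ meets at most $s-1$ of the $s$ classes, and a similar adjustment of the partition handles the general case. I expect pinning the exact constant $r$ (rather than $r+O(1)$) here to be where the real care is needed; a compression-based argument on $\mathcal{F}$ itself is a possible alternative route.
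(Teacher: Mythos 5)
Your reduction is exactly the paper's: Katona's circle averaging reduces the theorem to the cyclic statement (the paper's Lemma~\ref{lem1}), and passing to complements of length $s=n-r$ and reformulating $k$-wise intersection as ``no $k$ complementary arcs cover $\mathbb{Z}_n$'' is also how the paper argues. Your covering claim is in fact true (it is the contrapositive of Lemma~\ref{lem1}, noting that repetitions among the $k$ sets are allowed), and your observation that the greedy deficit blocks are pairwise disjoint is correct. But your proof of the covering claim has a genuine gap at exactly the point where the content of the lemma lies. The greedy argument only guarantees covered length $ks-\sum_i c_i\geq ks-(s-1)$, which covers $\mathbb{Z}_n$ only when $(k-1)s\geq n-1$; in the regime where the theorem is sharp, $ks$ is barely $\geq n$ (e.g. $r=\lfloor (k-1)n/k\rfloor$), so the slack of up to $s-1$ is not an artifact but a real shortfall, and no choice of starting point alone is obviously enough to remove it. Your proposed repair is complete only when $s\mid n$ (the residue-class-avoiding-$U$ argument); for general $n$ you assert that ``a similar adjustment of the partition handles the general case,'' which is precisely the step that needs an idea, as you yourself flag.

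For comparison, the paper closes this seam with Frankl's index-assignment device: fix one complementary arc $G$, relabel so that it ends at $n$, and assign to $G$ \emph{all} the ``virtual'' indices in $[n,k(n-r)]$, while every other complementary arc gets its actual endpoint as its index. The index set $[k(n-r)]$ is now a multiple of $n-r$, so the pigeonhole over residue classes mod $n-r$ works for every $n$: a fully assigned class would give at most $k$ arcs (with $G$ absorbing the overflow) whose union is $[n]$, a contradiction, so each of the $n-r$ classes contributes an unassigned index, and counting gives $m\leq r$ exactly. In other words, anchoring the construction at an actual member of the family and extending the circle to length $k(n-r)$ is the ``adjustment of the partition'' you need; without it (or an equivalent argument), your write-up proves the lemma, and hence the theorem, only for $r$ bounded away from the true threshold $(k-1)n/k$.
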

It is trivial to note that the $k=2$ case of Theorem \ref{frankwise} is the seminal Erd\H{o}s-Ko-Rado theorem \cite{ekr}.
\begin{theorem}[Erd\H{o}s-Ko-Rado]\label{ekr}
Let $\mathcal{F}\subseteq {[n] \choose r}$ be intersecting. If $r\leq n/2$, then $|\mathcal{F}|\leq {n-1 \choose r-1}$.
\end{theorem}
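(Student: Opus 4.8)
The statement to prove is the Erd\H{o}s-Ko-Rado theorem itself (Theorem~\ref{ekr}), and the plan is to give Katona's cyclic permutation proof, both because it is the shortest route and because its generalization is exactly the engine invoked later in this paper. Fix a cyclic ordering $C$ of $[n]$, i.e. one of the $(n-1)!$ arrangements of $[n]$ around a circle, and call a set $A\in\mathcal{F}$ an \emph{arc} of $C$ if the $r$ elements of $A$ occupy $r$ consecutive positions of $C$. Every $C$ has exactly $n$ arcs. The heart of the matter is the following claim: if $r\le n/2$, then at most $r$ members of $\mathcal{F}$ are arcs of any fixed $C$.

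To prove the claim, assume some $A\in\mathcal{F}$ is an arc of $C$ and relabel the circle so that $A$ occupies positions $1,\dots,r$. For each $i\in\{1,\dots,r-1\}$ let $P_i$ be the arc on positions $i+1,\dots,i+r$ (the arc starting just after the $i$-th element of $A$) and $Q_i$ the arc on positions $i-r+1,\dots,i$ read modulo $n$ (the arc ending at the $i$-th element of $A$). A short check shows that $A$, together with these $2(r-1)$ arcs, is exactly the list of arcs of $C$ that meet $A$, and that the hypothesis $r\le n/2$ forces $P_i$ and $Q_i$ to be disjoint for every $i$. Since $\mathcal{F}$ is intersecting, at most one arc out of each pair $\{P_i,Q_i\}$ lies in $\mathcal{F}$, so $\mathcal{F}$ contains at most $(r-1)+1=r$ arcs of $C$, as claimed.

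It remains to double count the pairs $(C,A)$ with $A\in\mathcal{F}$ an arc of $C$. Summing over the $(n-1)!$ cyclic orderings and applying the claim bounds this count by $r\,(n-1)!$. Summing instead over $A\in\mathcal{F}$: for a fixed $A$ there are exactly $r!\,(n-r)!$ cyclic orderings in which $A$ is an arc, since one may order the $r$ elements of $A$ inside their block in $r!$ ways and then arrange this block together with the remaining $n-r$ elements around the circle in $(n-r)!$ ways. Comparing the two counts gives $|\mathcal{F}|\cdot r!\,(n-r)!\le r\,(n-1)!$, hence
\[
|\mathcal{F}| \;\le\; \frac{r\,(n-1)!}{r!\,(n-r)!} \;=\; \frac{(n-1)!}{(r-1)!\,(n-r)!} \;=\; {n-1\choose r-1}.
\]

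The only step requiring care is the disjointness assertion in the claim, and specifically the fact that it survives the extreme case $r=n/2$: there $P_i$ and $Q_i$ together partition the $n$ positions into two complementary halves, which are disjoint, so the argument still goes through at the boundary of the hypothesis. The rest --- enumerating the arcs meeting a given arc, and the two halves of the double count --- is routine. (Alternatively one could derive the theorem from the Kruskal--Katona theorem or from the shifting argument in the original Erd\H{o}s-Ko-Rado paper, but the cyclic method is the one whose Cayley-graph refinement powers the stability results to come.)
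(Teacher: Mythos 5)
Your proof is correct: this is Katona's cycle (circle) argument, and every step checks out --- the enumeration of the $2r-1$ arcs meeting a fixed arc $A$, the disjointness of each pair $P_i,Q_i$ precisely because $2r\le n$ (including the boundary case $r=n/2$), and the double count with $r!\,(n-r)!$ cyclic orderings per set. Note, however, that the paper does not prove Theorem~\ref{ekr} at all: it is quoted as the classical result of \cite{ekr}, observed to be the $k=2$ case of Frankl's Theorem~\ref{frankwise}, which is likewise cited. What the paper does prove is the $k$-wise analogue of your key claim, namely Lemma~\ref{lem1}: for a $k$-wise intersecting family of intervals in a cyclic order with $r\le (k-1)n/k$, at most $r$ intervals belong to $\mathcal{F}$. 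Its proof is genuinely different from yours: it passes to the complements $\mathcal{F}^c$, assigns to each complement an index recording where it ends, and counts unassigned residues modulo $n-r$, using the fact that no $k$ complements can cover $[n]$. Your pairing argument ($P_i$ versus $Q_i$) is the cleaner and more elementary route, but it is intrinsically a $k=2$ argument, since it only exploits that two disjoint arcs cannot both lie in $\mathcal{F}$; the paper's index-assignment argument is what scales to $k$-wise intersection and, via Lemmas~\ref{lem2} and~\ref{lem3}, supports the equality characterization and the Cayley-graph stability analysis that are the actual point of Section~\ref{sec1}. So your proof is a correct and apt specialization of the paper's method at $k=2$, obtained by a different combinatorial mechanism for the per-cycle bound.
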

\subsection{Stability}
The classical extremal problem is to determine the maximum size and structure of a family on a given ground set of size $n$ which avoids a given forbidden configuration $\mathcal{F}$. For example, the Erd\H{o}s-Ko-Rado theorem finds the maximum size of a set system on the set $[n]$, which does not have a pair of disjoint subsets. Often only a few trivial structures attain this extremal number. In case of the EKR theorem, the only extremal structure when $r<\frac{n}{2}$ is that of a star in ${[n] \choose r}$. A natural further step is to ask whether non-extremal families which have size close to the extremal number also have structure similar to any of the extremal structures. This approach was first pioneered by Simonovits \cite{sim} to answer a question in extremal graph theory and a similar notion for set systems was recently formulated by Mubayi \cite{mub}. Apart from being an interesting question in it's own right, this approach has found many applications, especially in extremal hypergraph theory, where exact results are typically much harder to prove.

One of the first stability results in extremal set theory was the theorem of Hilton and Milner \cite{hm} which proved a stability result for the Erd\"os-Ko-Rado theorem by giving an upper bound on the maximum size of non-star intersecting families. Other stability results for the Erd\H{o}s-Ko-Rado theorem have been recently proved by Dinur-Friedgut \cite{dinfried}, Keevash \cite{keev}, Keevash-Mubayi \cite{keevmub} and others. We prove the following stability result for Theorem \ref{frankwise}.
\begin{theorem}\label{thm1}
For some $k\geq 2$, let $1\leq r< \frac{(k-1)n}{k}$, and let $\mathcal{F}\subseteq {[n] \choose r}$ be a $k$-wise intersecting family. Then for any $0\leq \epsilon<1$, there exists a $0\leq \delta<1$ such that if $|\mathcal{F}|\geq (1-\delta){n-1 \choose r-1}$, then there is an element $v\in [n]$ such that $|\mathcal{F}(v)|\geq (1-\epsilon){n-1 \choose r-1}$.
\end{theorem}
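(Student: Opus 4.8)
The plan is to run a $k$-wise version of Katona's cyclic permutation method, in the spirit of Keevash's proof of the stability of the Erd\H{o}s--Ko--Rado theorem; throughout, an \emph{arc} means a set of $r$ cyclically consecutive elements of a cyclic order of $[n]$. The first step is a cyclic structure lemma: if $r<\tfrac{(k-1)n}{k}$ and $\mathcal{A}$ is a $k$-wise intersecting family of arcs in a cyclic order of $[n]$, then $|\mathcal{A}|\le r$, with equality only when all arcs of $\mathcal{A}$ contain a common element (a \emph{cyclic star}). The bound $|\mathcal{A}|\le r$ is the cyclic form of Theorem~\ref{frankwise}; what matters for stability is the uniqueness, and here the \emph{strict} inequality $r<\tfrac{(k-1)n}{k}$ is essential. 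Indeed, if $\mathcal{A}$ has no common element then the complementary arcs $\{A^{c}:A\in\mathcal{A}\}$, of length $n-r>\tfrac nk$, cover $\mathbb{Z}_n$; exploiting the slack in $n-r>\tfrac nk$ one shows that already $k$ of these complementary arcs cover $\mathbb{Z}_n$, i.e.\ some $k$ members of $\mathcal{A}$ have empty intersection -- a contradiction. A short extremal refinement of the same argument then forces equality only for cyclic stars (the $k$-wise analogue of: for $n>2r$ the only intersecting families of $r$-arcs of size $r$ are stars).

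Next I would average over cyclic orders. For a cyclic order $\sigma$ let $f(\sigma)$ be the number of arcs of $\sigma$ lying in $\mathcal{F}$. By the lemma $f(\sigma)\le r$ for every $\sigma$, and since each $r$-set is an arc of exactly $r!(n-r)!$ of the $(n-1)!$ cyclic orders, $\sum_\sigma f(\sigma)=|\mathcal{F}|\,r!(n-r)!$; this recovers $|\mathcal{F}|\le\binom{n-1}{r-1}$, and if $|\mathcal{F}|\ge(1-\delta)\binom{n-1}{r-1}$ then $\sum_\sigma\bigl(r-f(\sigma)\bigr)\le\delta r\,(n-1)!$, so $f(\sigma)=r$ for all but at most $\delta r\,(n-1)!$ cyclic orders. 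Call such $\sigma$ \emph{good}; by the lemma, for a good $\sigma$ the family $\mathcal{F}\cap\{\text{arcs of }\sigma\}$ is the cyclic star at a uniquely determined element $v(\sigma)\in[n]$. A second double count -- each arc through $v(\sigma)$ lies in $\mathcal{F}(v(\sigma))$ -- gives $|\mathcal{F}(v)|\ge d_v\big/\bigl((r-1)!(n-r)!\bigr)$, where $d_v$ is the number of good $\sigma$ with $v(\sigma)=v$. Since $\sum_v d_v\ge(1-\delta r)(n-1)!$ and $\delta\le\epsilon/(rn^4)$, the whole theorem reduces to showing that $v(\sigma)$ is constant on all but an $O(\epsilon/n)$-fraction of the good orders; then for that value $v^\ast$ one gets $d_{v^\ast}\ge(1-\epsilon/n)(n-1)!$ and hence $|\mathcal{F}(v^\ast)|\ge(1-\epsilon/n)\binom{n-1}{r-1}$.

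To globalise the centre I would invoke the expansion of a Cayley graph $\Gamma$ of $S_n$ acting on cyclic orders, generated by a suitable family of local moves (transpositions, or transpositions of cyclically adjacent elements), which has a spectral gap at least inverse-polynomial in $n$. A local move alters only a bounded number of arcs of $\sigma$; for $r<n/2$ one may even choose the two moved elements to lie outside every arc through $v(\sigma)$, so that $\tau\sigma$ is again good with the \emph{same} centre. Deleting from $\Gamma$ the comparatively few ``bad'' edges -- those incident to a non-good order, and those joining good orders with different centres -- leaves the centre constant on each connected component of what remains; since only a small fraction of edges were deleted and $\Gamma$ expands, one component contains all but an $O(\epsilon/n)$-fraction of the good orders, on which $v(\cdot)$ is constant. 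Feeding this back through the inequality $|\mathcal{F}(v)|\ge d_v/((r-1)!(n-r)!)$ completes the argument, and a count of the constants shows $\delta\le\epsilon/(rn^4)$ is comfortably enough.

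The genuinely hard part is the control of the centre under local moves when $r$ is large -- a regime that already arises for $k=3$, where $r$ may exceed $n/2$. If $r\ge n/2$ then every arc through $v$ contains almost all of $[n]$, so \emph{no} transposition fixes all of them: one is forced to perturb the cyclic star, and a priori the centre could drift. One must therefore show that for almost every good $\sigma$ and almost every local move, the few arcs through $v(\sigma)$ that change are replaced by $r$-sets still lying in $\mathcal{F}$ (so $v(\tau\sigma)=v(\sigma)$ anyway), or at worst that $v(\tau\sigma)$ differs from $v(\sigma)$ only within a bounded cyclic distance -- still enough to keep the centre constant on the giant component. Quantifying this is subtle because, when $r$ is large, $\mathcal{F}$ has \emph{many} $r$-sets in its complement, so crude union bounds over $r$-sets are useless; the argument must localise to $r$-sets through the relevant vertex, where the complement of $\mathcal{F}$ genuinely is small, and then insert the resulting bad-edge count into the spectral-gap estimate with room to spare (the $n^4$ in $\delta\le\epsilon/(rn^4)$ is there precisely for this slack). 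Disentangling ``centre drift'' from the abundance of $r$-sets outside $\mathcal{F}$ is where the real work of the proof lies; the remaining steps are bookkeeping.
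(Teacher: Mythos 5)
Your framework (Katona's cyclic averaging, a uniqueness lemma for cyclic orders carrying the maximum number $r$ of members of $\mathcal{F}$, and a Keevash-style expansion argument on the Cayley graph of adjacent transpositions to globalise the centre) is exactly the paper's strategy, but the proposal omits the step on which the whole proof rests. You explicitly defer the control of ``centre drift'' under a local move -- calling it ``the genuinely hard part'' and ``where the real work of the proof lies'' -- and you offer only two unquantified options: either the perturbed arcs happen to lie in $\mathcal{F}$, or the centre moves ``only within a bounded cyclic distance,'' which you assert is ``still enough to keep the centre constant on the giant component.'' The second option is not sound: a drift of even one position per edge can accumulate along paths inside a connected component, so constancy of the centre on a component requires a zero-drift statement, not a bounded-drift one. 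The paper supplies precisely this zero-drift statement as a clean deterministic lemma (Lemma \ref{lem3}): if $\sigma$ is $v$-saturated and $\mu$ is obtained from $\sigma$ by an adjacent transposition away from $v$, then $\mu$ being saturated forces $\mu$ to be $v$-saturated. Its proof is a case analysis on the location of the transposition relative to the block of $n-r$ ``unassigned'' indices produced in the proof of Lemma \ref{lem2}, with each bad configuration yielding at most $k$ complements whose union is $[n]$, contradicting the $k$-wise intersecting hypothesis. No ``almost every move'' statement, no localisation of the complement of $\mathcal{F}$, and no extra bad-edge count beyond the number of unsaturated orders is needed. Without an argument of this kind your reduction to expansion does not close, so the proposal has a genuine gap at its crux.

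A secondary but real problem is your sketch of the equality case of the cyclic lemma. As written, the argument (``if $\mathcal{A}$ has no common element then the complementary arcs, of length $n-r>n/k$, cover $\mathbb{Z}_n$, and the slack yields $k$ complements covering'') never uses $|\mathcal{A}|=r$, so it would prove that \emph{every} $k$-wise intersecting family of $r$-arcs is a cyclic star; this is false already for $k=2$. For instance, with $n=15$ and $r=6<n/2$ the arcs $\{1,\dots,6\}$, $\{6,\dots,11\}$ and $\{11,\dots,15\}\cup\{1\}$ pairwise intersect but have no common point; equivalently, their complements, each of length $9>n/2$, cover $\mathbb{Z}_{15}$ while no two of them do, so the intermediate claim that a cover by arcs longer than $n/k$ contains $k$ arcs that already cover is false without using maximality of $|\mathcal{A}|$. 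The correct statement, and what the paper proves via the index-assignment bookkeeping of Lemmas \ref{lem1} and \ref{lem2}, is that only families attaining the maximum size $r$ are forced to be stars; that bookkeeping is also what powers Lemma \ref{lem3}, so supplying it is not optional.
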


We note that for $k\geq 2$, $\mathcal{F}$ is $k$-wise intersecting implies that it is intersecting. Hence if $r<n/2$, the results obtained in the papers mentioned above suffice as stability results for Theorem \ref{frankwise}. Consequently, the main interest of our theorem is in the structural information that it provides when $n/2\leq r<(k-1)n/k$. The technique we use to prove the theorem is a generalization of Katona's elegant proof of the EKR theorem \cite{katona}, initially employed by Keevash \cite{keev}, which uses expansion properties of a particular Cayley graph of the symmetric group.
\section{Proof of Theorem \ref{thm1}}\label{sec1}
Suppose $\mathcal{F}\subseteq {[n] \choose r}$ is a $k$-wise intersecting family, with $r<\frac{(k-1)n}{k}$. For any $0\leq \epsilon<1$, let $\delta=\frac{\epsilon}{2rn(n^3+1)}$ and suppose $|\mathcal{F}|\geq (1-\delta){n-1 \choose r-1}$. We will show that $\mathcal{F}$ contains a large star.
\subsection{Some Lemmas}
In this section, we will prove some Katona-type lemmas which we will employ later in the proof of the main theorem. We introduce some notation first. Consider a permutation $\sigma\in S_n$ as a sequence $(\sigma(1),\ldots,\sigma(n))$. We say that two permutations $\mu$ and $\pi$ are \textit{equivalent} if there is some $i\in [n]$ such that $\pi(x)=\mu(x+i)$ for all $x\in [n]$.\footnote{Addition is carried out mod $n$, so $x+i$ is either $x+i$ or $x+i-n$, depending on which lies in $[n]$.} Let $P_n$ be the set of equivalence classes, called \textit{cyclic} orders on $[n]$. For a cyclic order $\sigma$ and some $x\in [n]$, call the set $\{\sigma(x),\ldots,\sigma(x+r-1)\}$ a $\sigma$-interval of length $r$ \textit{starting} at $x$, \textit{ending} in $x+r-1$, and \textit{containing} the points ($x, x+1, \ldots, x+r-1$) (addition again mod $n$). The following lemma is due to Frankl \cite{fr}. We include the short proof below as we will build on these ideas in the proofs of the other lemmas.
\begin{lemma}[Frankl]\label{lem1}
Let $\sigma\in P_{n}$ be a cyclic order on $[n]$, and $\mathcal{F}$ be a $k$-wise intersecting family of $\sigma$-intervals of length $r\leq (k-1)n/k$. Then, $|\mathcal{F}|\leq r$.
\end{lemma}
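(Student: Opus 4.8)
The plan is to use a counting/pigeonhole argument on the cyclic order $\sigma$. Fix the cyclic order and identify each interval of length $r$ with its starting point, so $\mathcal{F}$ corresponds to a set $S \subseteq [n]$ of starting points, with $|\mathcal{F}| = |S|$. Suppose for contradiction that $|S| = |\mathcal{F}| \geq r+1$. The key observation is the following: two intervals starting at points $a$ and $b$ are disjoint if and only if the cyclic distance between $a$ and $b$ is at least $r$ (in both directions), i.e. $r \leq b - a \leq n - r$ (mod $n$). More generally, I want to locate $k$ intervals in $\mathcal{F}$ whose starting points are spread out enough around the cycle that the intervals have empty common intersection; this contradicts the $k$-wise intersecting hypothesis.

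First I would order the starting points in $S$ cyclically as $x_1 < x_2 < \cdots < x_m$ (with $m \geq r+1$) and consider the cyclic gaps $g_i = x_{i+1} - x_i$ (indices mod $m$), so $\sum_{i=1}^m g_i = n$. Since $m \geq r+1 > r$ and the gaps are positive integers summing to $n$, and using $r \leq (k-1)n/k$ i.e. $n \geq \frac{k}{k-1} r$, an averaging argument should let me select $k$ of the starting points $x_{i_1}, \ldots, x_{i_k}$ — greedily walking around the cycle, each time jumping forward by at least $r$ positions in the $\sigma$-labelling — so that these $k$ intervals are pairwise "far apart" in the sense that no point of $[n]$ lies in all $k$ of them. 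Concretely: a point $p$ lies in the interval starting at $x$ iff $x \in \{p, p-1, \ldots, p-r+1\}$, a window of $r$ starting points; so the $k$ intervals have a common point iff all $k$ chosen starting points lie within some window of $r$ consecutive points on the cycle. With $m \geq r+1$ starting points total and the length constraint $kr \leq (k-1)n < kn$, hence $r < n$ and more precisely the $k$ windows cannot all be packed, I can choose $k$ starting points not all within any length-$r$ window, giving the required empty intersection.

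The cleanest way to run this: if $|\mathcal{F}| \geq r+1$, then among any $r+1$ consecutive starting positions (cyclically) we may extract, by a greedy jump-by-$r$ procedure, starting points $y_1, \ldots, y_k$ with $y_{j+1} - y_j \geq r$ for each $j$ and $y_k - y_1 \geq (k-1)r$; then since $(k-1)r \leq n - n/k \cdot \ldots$ — more simply, since $kr \leq (k-1)n$ gives $n - (k-1)r \geq n - (k-1)n \cdot \frac{... }{}$, one checks the "wrap-around" gap from $y_k$ back to $y_1$ is also at least $r$, i.e. all $k$ consecutive gaps (including the wrap) are $\geq r$. Then the $k$ intervals starting at $y_1, \ldots, y_k$ are such that no single point is covered by all of them (any length-$r$ window of starting points contains at most one $y_j$), contradicting $k$-wise intersection. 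Hence $|\mathcal{F}| \leq r$.

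The main obstacle is making the greedy selection and the wrap-around gap estimate precise: one must verify that starting from $r+1$ points one can always find $k$ of them realizing all $k$ cyclic gaps $\geq r$, which is exactly where the hypothesis $r \leq (k-1)n/k$ (equivalently $kr \leq (k-1)n$, equivalently $n - (k-1)r \geq r$ when combined with the counting) gets used. I expect this to be a short pigeonhole once set up correctly, but the bookkeeping with cyclic indices and the two uses of the inequality (one for the interior gaps, one for the wrap gap) is the delicate point.
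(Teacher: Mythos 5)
Your reduction to starting points is fine, and your stated criterion for empty intersection is the correct one: the $k$ intervals have a common point if and only if all $k$ starting points lie in a window of $r$ consecutive positions (equivalently, the $k$ complementary arcs of length $n-r$ fail to cover the cycle). The gap is in the selection step. You propose to find $k$ starting points whose $k$ cyclic gaps, including the wrap-around gap, are all at least $r$; this configuration forces $kr\le n$. But the hypothesis is $r\le (k-1)n/k$, i.e.\ $k(n-r)\ge n$, which for $k\ge 3$ allows $n/k<r$ and even $r>n/2$. In that range your selection is impossible: when $r>n/2$ no two starting points can be at cyclic distance $\ge r$ in both directions (any two $r$-intervals automatically meet), let alone $k$ of them. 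A concrete instance is $n=3$, $r=2$, $k=3$, which satisfies $r\le (k-1)n/k$; the three $2$-intervals of the $3$-cycle have empty common intersection, yet no pair of starting points has both gaps $\ge 2$, so your ``jump by $r$'' greedy can never be run. The garbled inequality manipulation in your third paragraph is exactly this point: you have conflated $kr\le (k-1)n$ with $kr\le n$, and the lemma's interesting regime is precisely where the latter fails.

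Even granting the correct criterion, the assertion you actually need --- that $|S|\ge r+1$ forces the existence of at most $k$ points of $S$ not all contained in a length-$r$ window, equivalently at most $k$ of the complementary $(n-r)$-arcs covering $[n]$ --- is essentially the entire content of the lemma, and your sketch supplies no argument for it other than the infeasible all-gaps-$\ge r$ configuration. The paper proves the contrapositive by Frankl's counting on the complements: each set of $\mathcal{F}^c$ is an $(n-r)$-interval assigned the index at which it ends, a distinguished set ending at $n$ receives all indices in $[n,k(n-r)]$, and since no $k$ complements can cover $[n]$, every residue class of $[k(n-r)]$ modulo $n-r$ must contain an unassigned index; counting assigned and unassigned indices gives $|\mathcal{F}|\le r$. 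If you wish to salvage your route, you would need a genuine covering-selection argument for arcs of length $n-r\ge n/k$ (showing that some $\le k$ of them cover the cycle whenever there are at least $r+1$ of them), which is a different and not obviously easier task than the paper's double count.
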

 \begin{proof}
 Let $\mathcal{F}^c=\{[n]\setminus F:F\in \mathcal{F}\}$. Let $|\mathcal{F}|=|\mathcal{F}^c|=m$. We will prove that $m\leq r$. Since $r\leq (k-1)n/k$, we have $n\leq k(n-r)$. Suppose $G_1, \ldots, G_k\in \mathcal{F}^c$. Clearly $\cup_{i=1}^k G_i\neq [n]$; otherwise $\cap_{i=1}^k ([n]\setminus G_i)=\emptyset$, which is a contradiction. Let $G\in \mathcal{F}^c$. Without loss of generality, suppose $G$ ends in $n$. We now assign indices from $[1,k(n-r)]$ to sets in $\mathcal{F}^c$. For every set $G'\in \mathcal{F}^c\setminus \{G\}$, assign the index $x$ to $G'$ if $G'$ ends in $x$. Assign all indices in $[n,k(n-r)]$ for $G$. Consider the set of indices $[k(n-r)]$ and partition them into equivalence classes mod $n-r$. Suppose there is an equivalence class such that all $k$ indices in that class are assigned. Let $\{H_i\}_{i\in [k]}$ be the $k$ sets in $\mathcal{F}^c$ which end at the $k$ indices in the equivalence class. It is easy to note that $\cup_{i=1}^k H_i=[n]$, which is a contradiction. So for every equivalence class, there exists an index which has not been assigned to any set in $\mathcal{F}^c$. This implies that there are at least $n-r$ indices in $[k(n-r)]$ which are unassigned. Each set in $\mathcal{F}^c\setminus \{G\}$ has one index assigned to it, and $G$ has $k(n-r)-n+1$ indices assigned to it. This gives us $m-1+k(n-r)-n+1+n-r\leq k(n-r)$, which simplifies to $m\leq r$, completing the proof.

\renewcommand{\qedsymbol}{$\diamond$}
\end{proof}
We will now characterize the case when $|\mathcal{F}|=r$, in the following lemma.
\begin{lemma}\label{lem2}
Let $\sigma\in P_{n}$ be a cyclic order on $[n]$, and let $\mathcal{F}$ be a $k$-wise intersecting family of $\sigma$-intervals of length $r< (k-1)n/k$. If $|\mathcal{F}|= r$, then $\mathcal{F}$ consists of all intervals which contain a point $x$.
\end{lemma}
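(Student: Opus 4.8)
The plan is to revisit the counting argument in the proof of Lemma \ref{lem1} and extract equality conditions. Working with the complement family $\mathcal{F}^c$ as before, the inequality $m \le r$ was obtained by accounting for indices in $[k(n-r)]$: each of the $m-1$ sets in $\mathcal{F}^c \setminus \{G\}$ receives one index, $G$ receives $k(n-r)-n+1$ indices, and at least $n-r$ indices (one per equivalence class mod $n-r$) are unassigned. When $|\mathcal{F}| = r$, \emph{every} index in $[k(n-r)]$ is accounted for exactly once, so equality holds at every stage: (i) every equivalence class mod $n-r$ contains exactly one unassigned index, hence exactly $k-1$ assigned indices; (ii) no index is assigned to two different sets, i.e. all $m-1$ endpoints among $\mathcal{F}^c \setminus \{G\}$ are distinct and distinct from $n$; and (iii) $G$'s block $[n, k(n-r)]$ meets every equivalence class in at most (hence exactly, since $r < (k-1)n/k$ forces $k(n-r) - n + 1 \le n-r$, in fact $< n - r$) \dots — I would use (i) most heavily.

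The heart of the argument is to show that the $r$ intervals of $\mathcal{F}$ are precisely $\{[x, x+r-1], [x+1, x+r], \ldots, [x+r-1, x+2r-2]\}$ for some $x$, equivalently that their complements in $\mathcal{F}^c$ are $r$ consecutive $(n-r)$-intervals. From (ii), the $r$ sets of $\mathcal{F}^c$ have $r$ distinct endpoints in $[n]$; call this endpoint set $E \subseteq [n]$ with $|E| = r$ (one of them being $n$). Translating by $n-r$: for a class $C$ mod $n-r$, if $j \in C \cap [k(n-r)]$ is the unique \emph{unassigned} index, then the other $k-1$ elements of $C$ are endpoints of sets in $\mathcal{F}^c$ (reading $G$'s block as covering the endpoints $n, n+1, \ldots$ up to $k(n-r)$). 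I would argue that the unassigned index in each class must be the \emph{same} residue position — specifically, if in some class the gap occurred ``late'' and in another ``early'', one could pick $k$ sets in $\mathcal{F}^c$ whose endpoints fill a complete residue class, forcing their union to be $[n]$, contradicting $k$-wise intersection. This rigidity shows $E$ is an interval of $r$ consecutive points, say $E = \{y, y+1, \ldots, y+r-1\}$, so $\mathcal{F}^c$ consists of the $(n-r)$-intervals ending at these points, i.e. the intervals $[y - (n-r) + 1, y], \ldots, [y+r-1-(n-r)+1, y+r-1]$, which are $r$ consecutive $(n-r)$-intervals. Complementing, $\mathcal{F}$ is $r$ consecutive $r$-intervals, and any $r$ consecutive $r$-intervals on a cycle of length $n > r$ share exactly the point in the ``middle'' — more precisely they all contain the point $y+1$ (the common intersection of $\{[y+1-r+? ,\ldots]\}$, which I would pin down by direct inspection: $r$ consecutive intervals of length $r$ have intersection a single point when $n \geq r+1$). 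That single point $x$ is the desired $x$, and conversely the family of all intervals through a fixed point clearly has size exactly $r$ and is $k$-wise (indeed $\infty$-wise) intersecting, giving the ``if'' direction trivially.

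The main obstacle I anticipate is the rigidity step: turning the equality condition ``exactly one unassigned index per class'' into the statement that the unassigned indices all sit in the same relative position, and hence that $E$ is an interval. The subtlety is that the block assigned to the special set $G$ complicates the bookkeeping of which residues are ``endpoints'' versus ``covered by $G$'', so I would likely first reduce to the case where $G$ is chosen to end as early as possible (or re-choose the distinguished set) to make $G$'s block exactly one residue class or otherwise canonical, and then run a clean swapping/union argument: if two classes have their gaps in incompatible positions, exhibit $k$ sets covering $[n]$. The strict inequality $r < (k-1)n/k$ is used precisely to guarantee $G$'s block is small enough (fewer than $n-r$ indices) that it cannot by itself span a residue class, which is what makes the extremal configuration unique rather than a larger family of possibilities.
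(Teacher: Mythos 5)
Your setup matches the paper's: analyze the equality case of Lemma \ref{lem1} via $\mathcal{F}^c$, conclude that each residue class mod $n-r$ has exactly one unassigned index, and aim to show the unassigned indices (equivalently, the endpoints of sets in $\mathcal{F}^c$) occupy consecutive positions, which yields the common point. But the step you yourself flag as ``the main obstacle'' --- the rigidity argument showing the gaps in different residue classes must align into a single interval --- is exactly the content of the lemma, and you leave it as a plan (``I would argue\ldots'', ``run a clean swapping/union argument'') rather than a proof. The paper carries this out by induction along the run of unassigned indices: let $x$ be the smallest unassigned index; if $y=x+i$ were assigned (with $Y\in\mathcal{F}^c$ ending at $y$), take the assigned indices of the class of $y-1$ lying in $(y-1,n]$ (call them $I_1$), shift those lying in $[1,y-1)$ by one to land in the class of $y$ (these are assigned because they are smaller than $x$), and verify that the at most $k$ sets ending at these indices together with $Y$ cover $[n]$, using $n<k(n-r)$ to see that the set ending at the smallest shifted index wraps around past the largest index of $I_1$. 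Without some concrete version of this covering construction, your proposal does not establish the lemma.

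A further problem is your reading of the hypothesis $r<(k-1)n/k$. You claim it forces $k(n-r)-n+1\le n-r$ (indeed $<n-r$), so that $G$'s block of indices $[n,k(n-r)]$ is ``too small to span a residue class,'' and you propose to base a canonicalization of $G$ on this. That inequality is false in general: for $k=3$, $n=12$, $r=5<8$ the block has size $10>7=n-r$. The strict inequality is used in the opposite direction --- it gives $n<k(n-r)$, i.e.\ the block is nonempty (size at least $2$) and the wrap-around step in the covering argument goes through. (A small additional slip: the common point of the $r$ consecutive $r$-intervals whose complements end at $y,\ldots,y+r-1$ is $y+r$, not $y+1$; you flagged this, and it is easily fixed, but the rigidity gap above is substantive.)
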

\begin{proof}
Without loss of generality, let $\sigma$ be the identity permutation and let $\mathcal{F}$ be a $k$-wise intersecting family of $\sigma$-intervals (henceforth, we drop the $\sigma$). As in the proof of Lemma \ref{lem1}, we consider $\mathcal{F}^c$ and assume (without loss of generality) that $F=\{r+1,r+2,\ldots,n\}\in \mathcal{F}^c$. It is clear from the proof of Lemma \ref{lem1} that if $|\mathcal{F}|=|\mathcal{F}^c|=r$, then there are exactly $n-r$ indices in $[k(n-r)]$, one from each equivalence class (modulo $n-r$), which are not assigned to any set in $\mathcal{F}^c$. In other words, no interval in $\mathcal{F}^c$ ends in any of these $n-r$ indices. Since $F$ ends in $n$, all indices in $[n,k(n-r)]$ (and there will be at least $2$, since $r<(k-1)n/k$) will be assigned. It will be sufficient to show that the set of unassigned indices is an interval $[x,x+n-r-1]$ for some $x\in [r]$. This would mean that no interval in $\mathcal{F}^c$ ends in any of the indices from $[x,x+n-r-1]$ and also that for every index $i\in [1,x-1]\cup [x+n-r,n]$, the interval ending in $i$ is a member of $\mathcal{F}^c$. This would imply that for every $i\in [n]$, there is an interval in $\mathcal{F}$ that \textit{begins} in index $i$ if and only if $i\in [1,x]\cup [x+n-r+1,n]$. This would mean that every interval in $\mathcal{F}$ contains $x$, as required.

Let $x$ be the smallest unassigned index in $[n-1]$. We will show that $[x,x+n-r-1]$ is the required interval containing all $n-r$ unassigned indices. Clearly $x\leq r$. Let $x\equiv j\textrm{ mod } n-r$. We will show that $x+i$ is unassigned for each $0\leq i\leq n-r-1$. We argue by induction on $i$, with the base case being $i=0$. Let $y=x+i$ for some $1\leq i\leq n-r-1$. Suppose $y$ is assigned, i.e. suppose there is a set $Y$ in $\mathcal{F}^c$ that ends in the index $y$. By the induction hypothesis, $y-1$ is unassigned. Let $E_{y-1}$ be the equivalence class containing $y-1$; since $n<k(n-r)$, we have $|E_{y-1}|\leq k$. As mentioned earlier, since $|\mathcal{F}^c|=r$, there are $n-r$ unassigned indices, exactly one from each equivalence class modulo $n-r$. In conjunction with the induction hypothesis, this means that every index in $E_{y-1}\setminus \{y-1\}$ is assigned to some interval in $\mathcal{F}^c$.

Let $I_1=E_{y-1}\cap (y-1,n]$. By the previous observation, each index in $I_1$ is assigned. Similarly, let $I_2=E_{y-1}\cap [1,y-1)$. Let $I_2'=\{j+1:j\in I_2\}$. $I_2'$ contains indices in the same equivalence class as $y$, and are assigned. This is true because all indices in $I_2'$ are smaller than $x$ and $x$ is the smallest unassigned index.\footnote{This is not true when $i>n-r-1$ and thus makes the induction ``stop'' at $i=n-r-1$.} Clearly, $E_{y-1}=I_1\cup I_2\cup \{y-1\}$ and consequently, $|E_{y-1}|=|I_1|+|I_2|+1$, giving $|I_1|+|I_2'|=|I_1|+|I_2|=|E_{y-1}|-1\leq k-1$. Let $J=I_1\cup I_2'$, so $|J|\leq k-1$ and all indices in $J$ are assigned. So let $\mathcal{H}$ be the subfamily of intervals in $\mathcal{F}^c$ which end in indices from $J$; we have $|\mathcal{H}|\leq k-1$ and hence the family $\mathcal{G}=\mathcal{H}\cup \{Y\}$ has at most $k$ sets. We will show that $\bigcup_{G\in \mathcal{G}}G=[n]$.

Let $p$ be the largest index in $I_1$ and let $q$ be the smallest index in $I_2'$. Now $q$ lies in the same equivalence class as $y$ and $p$ lies in the same equivalence class as $y-1$. If $n=k(n-r)$, it is easy to see that the set which ends in $q$ begins in the largest index from the same equivalence class as $y+1$, in other words, $p+2$. However, we have $n<k(n-r)$, so the set which ends in $q$ must contain $p+1$. This proves that the union of all sets in $\mathcal{G}$ is $[n]$, which is a contradiction. Thus $y$ is unassigned.

\renewcommand{\qedsymbol}{$\diamond$}
\end{proof}

Now let $\mathcal{F}\subseteq {[n] \choose r}$ be a $k$-wise intersecting family for some $r<\dfrac{(k-1)n}{k}$. For each cyclic order $\sigma \in P_n$, let $\mathcal{F}_{\sigma}$ be the subfamily of sets in $\mathcal{F}$ that are intervals in $\sigma$. We say that $\sigma$ is \textit{saturated} if $|\mathcal{F}_{\sigma}|=r$; otherwise call it unsaturated. By Lemma \ref{lem2}, if $\sigma$ is saturated, all sets in $\mathcal{F}_{\sigma}$ contain a common point, say $v$, so call $\sigma$ $v$-saturated to identify the common point.

For $i\leq n$, define an \textit{adjacent transposition} $A_i$ on a cyclic order $\sigma$ as an operation that swaps the elements in positions $i$ and $i+1$ ($i+1=1$ if $i=n$) of $\sigma$. We are now ready to prove our next lemma.
\begin{lemma}\label{lem3}
For $k\geq 2$, let $\mathcal{F}\subseteq {[n] \choose r}$ be a $k$-wise intersecting family with $r<\dfrac{(k-1)n}{k}$ and let $\sigma\in P_n$ be a $v$-saturated cyclic order. Let $\mu$ be the cyclic order obtained from $\sigma$ by an adjacent transposition $A_i$, $i\in [n]\setminus \{v,v-1\}$ ($v-1=n$ if $v=1$). If $\mu$ is saturated, then it is $v$-saturated.
\end{lemma}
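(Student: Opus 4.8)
The plan is to track exactly which members of $\mathcal{F}$ are intervals of $\mu$ but not of $\sigma$, and conversely. Since $\mu$ arises from $\sigma$ by swapping the elements in positions $i$ and $i+1$, the only $\sigma$-intervals that fail to be $\mu$-intervals are $A$, the $\sigma$-interval ending at $i$, and $B$, the $\sigma$-interval starting at $i+1$; and the only $\mu$-intervals that are not $\sigma$-intervals are $A'$, the $\mu$-interval ending at $i$, and $B'$, the $\mu$-interval starting at $i+1$. (This uses $1<r<n$; if $r=n-1$, which forces $k>n$, then every $r$-set is an interval of every cyclic order and $\mathcal{F}_\mu=\mathcal{F}_\sigma$, so the lemma is immediate.) Hence $\mathcal{F}_\mu=(\mathcal{F}_\sigma\setminus R)\cup N$ with $R=\{A,B\}\cap\mathcal{F}_\sigma$ and $N=\{A',B'\}\cap\mathcal{F}$, and if $\mu$ is saturated then $|R|=|N|$. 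Because $i\notin\{v,v-1\}$ we have $\mu(v)=\sigma(v)$, so ``$\mu$ is $v$-saturated'' is equivalent to ``every member of $\mathcal{F}_\mu$ contains the element $\sigma(v)$''. By Lemma~\ref{lem2}, $\mathcal{F}_\sigma$ is precisely the set of all $r$ intervals through $v$, so every member of $\mathcal{F}_\sigma\setminus R\subseteq\mathcal{F}_\mu$ contains $\sigma(v)$; thus it suffices to show that every member of $N$ contains $\sigma(v)$.

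First I would note that if $A\in\mathcal{F}_\sigma$ then, since $i\ne v$, the position $v$ lies among the positions of $A$ (indeed strictly before $i$), so $A'$, which occupies those same positions in $\mu$, contains $\sigma(v)$; symmetrically $B\in\mathcal{F}_\sigma$ gives $\sigma(v)\in B'$. Consequently, whenever $|R|=|N|\in\{0,2\}$, or $|R|=|N|=1$ with the member of $N$ being the ``partner'' of the member of $R$ ($A'$ for $A$, $B'$ for $B$), the family $\mathcal{F}_\mu$ consists of $r$ intervals all containing $\sigma(v)$, hence is $v$-saturated by Lemma~\ref{lem2}. Up to reversing the cyclic order --- which interchanges $A\leftrightarrow B$ and $A'\leftrightarrow B'$ --- the only remaining possibility is the \emph{critical case} $R=\{A\}$, $N=\{B'\}$, with $\sigma(v)\notin B'$, where $\mathcal{F}_\mu=(\mathcal{F}_\sigma\setminus\{A\})\cup\{B'\}$.

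For the critical case I would argue by contradiction, assuming $\mu$ is $w$-saturated with $w\ne v$. Each of the $r-1$ sets in $\mathcal{F}_\sigma\setminus\{A\}$ is both a $\sigma$- and a $\mu$-interval, so it occupies the same $r$ positions in each, and those positions contain both $v$ and $w$; together with $B'$ these account for all $r$ $\mu$-intervals through $w$, so exactly $r-1$ sets of $r$ consecutive positions contain both $v$ and $w$. An elementary count then forces $w$ to be a cyclic neighbour of $v$, and matching the unique window through $v$ that misses $w$ (necessarily the position set of $A$) and the unique window through $w$ that misses $v$ (the position set of $B'$) against the positions $i,i+1$ forces either $i=v$ (impossible) or $n=2r$ (occurring only when $w=v-1$ and $i=v+r-1$). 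In the latter case the two extreme intervals through $v$, $E_1=\{\sigma(v),\dots,\sigma(v+r-1)\}$ and $E_2=\{\sigma(v-r+1),\dots,\sigma(v)\}$, lie in $\mathcal{F}_\sigma\subseteq\mathcal{F}$ and satisfy $E_1\cap E_2=\{\sigma(v)\}$ since $n=2r$; as $\sigma(v)\notin B'$, the sets $E_1,E_2,B'$ have empty intersection, and because $n=2r$ together with $r<\frac{(k-1)n}{k}$ forces $k\ge3$, the $k$ sets consisting of $k-2$ copies of $E_1$ together with $E_2$ and $B'$ lie in $\mathcal{F}$ with empty intersection, contradicting that $\mathcal{F}$ is $k$-wise intersecting. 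Hence $w=v$, as required.

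I expect the critical case to be the real obstacle: it is not ruled out by pairwise intersection, so the proof must genuinely use $k$-wise intersection for $k\ge3$, and one must correctly pin down that the only configuration which could realize it is $n=2r$ with $i=v+r-1$ (up to reversal). The rest is bookkeeping about how an adjacent transposition changes the family of intervals.
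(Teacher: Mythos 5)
Your proof is correct, and it reaches the same crux as the paper's, but by a noticeably different route. Both arguments ultimately show that the only way a saturated $\mu$ could acquire a new centre is the exceptional configuration $n=2r$ (the paper's $n=2(n-r)$), which forces $k\ge 3$ and is then destroyed by three members of $\mathcal{F}$ with empty intersection --- the paper phrases this as three sets of $\mathcal{F}^c$ whose union is $[n]$. The difference is in the bookkeeping. The paper stays entirely inside the complement/index machinery of Lemmas \ref{lem1} and \ref{lem2} (complements assigned to the indices where they end, saturation read off from the block of $n-r$ unassigned indices) and splits into cases according to where $i$ sits relative to that block. You instead work in the primal: the transposition exchanges only the two boundary intervals $A,B$ for $A',B'$; Lemma \ref{lem2}, used as an exact characterization of both $\mathcal{F}_\sigma$ and $\mathcal{F}_\mu$, shows $A'$ (resp.\ $B'$) contains $\sigma(v)$ precisely when $A$ (resp.\ $B$) lies in $\mathcal{F}_\sigma$, reducing everything to your critical case; and the positional case analysis is replaced by the count that exactly $r-1$ windows of length $r$ contain both $v$ and the putative new centre $w$, which pins $w$ to a cyclic neighbour of $v$ and then forces $i=v$ (excluded) or $n=2r$ with $i=v+r-1$, where $E_1,E_2,B'$ give the required violation of $k$-wise intersection. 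Your version makes the structure of the exceptional case more transparent and avoids the index-assignment language; the paper's version is uniform with the machinery it needs anyway for Lemmas \ref{lem1} and \ref{lem2}. Two small points to make explicit in a write-up: the window count is $\max(r-d,0)+\max(r-(n-d),0)=r-1$ with $d$ the cyclic distance from $v$ to $w$, and its degenerate branch $2r-n=r-1$ is exactly why the $r=n-1$ case must be disposed of first (as you do); and $\sigma(v)\notin B'$ in the critical case is automatic from Lemma \ref{lem2} (since $B\notin\mathcal{F}_\sigma$ means $B$ misses position $v$), not an additional assumption. Also note that padding with $k-2$ copies of $E_1$ uses the convention that the $F_i$ in the definition of $k$-wise intersecting need not be distinct, which is the same convention the paper relies on (e.g.\ when asserting that $k$-wise intersecting implies intersecting).
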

\begin{proof}
As in the proof of Lemma \ref{lem2}, we let $\sigma$ be the \textit{identity} cyclic order $(1,2,\ldots,n)$ and suppose it is $n$-saturated, so $1\leq i\leq n-2$. Let $\mu=(1,\ldots,i-1,i+1,i,\ldots,n)$ be obtained from $\sigma$ by the adjacent transposition $A_i$ and let $\mu$ be saturated. As before, we consider the family of complements $\mathcal{F}^c$ and consider sets in this family which are intervals in the two cyclic orders. By Lemma \ref{lem2}, we know that for a $v$-saturated cyclic order, the set of the $n-r$ unassigned indices is $\{v,\ldots,v+n-r-1\}$. For $\sigma$, this interval is $\{n,1,\ldots,n-r-1\}$ as it is $n$-saturated. We will show that the interval of unassigned indices remains the same for $\mu$, thus proving that $\mu$ is also $n$-saturated.

Observe that there are only $2$ (out of $n$) intervals of length $n-r$ where $\sigma$ and $\mu$ differ in. First, the intervals which end in index $i$, i.e. $\{i-(n-r)+1,\ldots,i\}$ for $\sigma$ and $\{i-(n-r)+1,\ldots,i-1,i+1\}$ for $\mu$ and second, the intervals which begin in index $i+1$, i.e. $\{i+1,\ldots,i+n-r\}$ for $\sigma$ and $\{i,i+2,\ldots,i+n-r\}$ for $\mu$. In other words, only two indices, $i$ and $i+n-r$ can potentially change from assigned to unassigned, or vice-versa after the transposition $A_i$. We now consider three cases, depending on the value of $i$.
\begin{itemize}
\item Suppose $i\in (n-r-1,n-1)$. Since $i>n-r-1$, we assume that the index $i+n-r$ is assigned in $\mu$ and lies in the set $\{n,1,\ldots,n-r-2\}$ (since $i<n-1$, $i+n-r\neq n-r-1$). Suppose first that $i+n-r\neq n$. In this case, all indices in the set $A=\{n\}\cup [1,i+n-r)\cup (i+n-r,n-r-1]$ are unassigned in $\mu$. This is a contradiction, since $\mu$ is saturated and by Lemma \ref{lem2}, all unassigned indices \textit{must} occur in an interval of length $n-r$.
   \\
    So let $i+n-r=n$ be assigned in $\mu$, i.e. $\{i,i+2,\ldots,n\}\in \mathcal{F}^c$. Since $n$ is assigned and all indices in the interval $[1,n-r-1]$ are unassigned in $\mu$, by Lemma \ref{lem2}, the index $n-r$ must be unassigned in $\mu$; so $\{1,\ldots,n-r-1,\mu(n-r)\}\notin \mathcal{F}^c$. This is only possible if $i=n-r$ and consequently, $\mu=\{1,\ldots,n-r-1,n-r+1,n-r,\ldots,n\}$. Since $i+n-r=n$, this gives $n=2(n-r)$. Now, as $r<(k-1)n/k$, we must have $k\geq 3$. Now consider the following three intervals, each of length $n-r$: $\{1,\ldots,n-r\}$, $\{n-r,n-r+1,\ldots,n-1\}$ and $\{n-r,n-r+2,\ldots,n\}$. Note that the first two are intervals in $\sigma$ and since they both end in \textit{assigned} indices ($n-r$ and $n-1$ respectively) for $\sigma$, they are sets in $\mathcal{F}^c$. Similarly, the third set is an interval in $\mu$, ends in an assigned index $n$, and hence is a set in $\mathcal{F}^c$. The union of these three sets is $[n]$, a contradiction, completing the proof of this case.
\item Suppose $i\in [1,n-r-1)$. It is clear that the index $n-r-1$ stays unassigned in $\mu$, as the interval which ends in $n-r-1$ is the same in both cyclic orders, except in the order of elements. Also, if the index $i$ is assigned in $\mu$, the set of unassigned indices for $\mu$ would be some superset of $[1,i)\cup (i,n-r-1)$ not containing $i$; in other words, not of the form $[x,\ldots,x+n-r-1]$ for any $x\in [n]$, thus contradicting Lemma \ref{lem2}.\footnote{The case in which this can still satisfy Lemma \ref{lem2} is the trivial $r=1$. But this would imply $i=n$, a contradiction.} So the only way in which the set of unassigned indices can change is if $i+n-r=n$ and $n$ is assigned in $\mu$. Now the union of the two intervals $\{1,\ldots,n-r\}$ and $\{i,i+2,\ldots,n-r,\ldots,n\}$, both of which are sets in $\mathcal{F}^c$ (because $n-r$ and $n$ are assigned indices in $\sigma$ and $\mu$ respectively) is $[n]$, a contradiction.
\item Suppose $i=n-r-1$. In this case, the index $n-r$ is still assigned in $\mu$ because the interval ending in $n-r$ is the same in both cyclic orders, except the order of the elements. Using Lemma \ref{lem2}, this means that the set of $n-r$ unassigned indices in $\mu$ can be either $\{n,1,\ldots,n-r-1\}$ or $\{n-1,n,\ldots,n-r-2\}$. If the set is the same as in $\sigma$, we are done, so suppose it is $\{n-1,n,\ldots,n-r-2\}$. This means that $n-r-1$ is assigned in $\mu$ and $n-1$ is unassigned in $\mu$. This is only possible if $i+n-r=n-1$. This means $n=2(n-r)$ and $k\geq 3$. Now consider the following three intervals: $\{1,\ldots,n-r\}$, $\{n-r,n-r+1,\ldots,n-1\}$ and $\{n,1,\ldots,n-r-2,n-r\}$. The first two sets are intervals in $\sigma$ and end in assigned indices ($n-r$ and $n-1$ respectively) for $\sigma$, while the third set is an interval in $\mu$ which ends in an assigned index $i=n-r-1$. Thus, all three sets lie in $\mathcal{F}^c$. The union of these three sets is clearly $[n]$, a contradiction.
\end{itemize}

\renewcommand{\qedsymbol}{$\diamond$}
\end{proof}
\subsection{Cayley Graphs}
In this small section, we gather some facts about expansion properties of a specific Cayley graph of the symmetric group. We will consider the Cayley graph $G$ on $S_{n-1}$ generated by the set of adjacent transpositions $A=\{(12),\ldots,(n-2\textrm{ }n-1)\}$. In particular, the vertex set of $G$ is $S_{n-1}$ and two permutations $\sigma$ and $\mu$ are adjacent if $\mu=\sigma\circ a$, for some $a\in A$. We note that the transposition operates by exchanging adjacent positions (as opposed to consecutive values). $G$ is an $n-2$-regular graph. It was shown by Keevash \cite{keev}, using a result of Bacher \cite{bach}, that $G$ is an $\alpha$-expander for some $\alpha>\frac{1}{n^3}$, i.e. for any $H\subseteq V(G)$ with $|H|\leq \frac{|V(G)|}{2}$, we have $N(H)\geq \alpha|H|> \frac{|H|}{n^3}$, where $N(H)$ is the set of all vertices in $V(G)\setminus H$ which are adjacent to some vertex in $H$.
\subsection{Proof of Main Theorem}
\begin{proof}[Proof of Theorem \ref{thm1}]
We will finish the proof of Theorem \ref{thm1} in this section. We can identify every cyclic order in $P_n$ with a permutation $\sigma\in S_n$ having $\sigma(n)=n$. Restricting $\sigma$ to $[n-1]$ gives a bijection between $P_n$ and $S_{n-1}$. Let $U$ be the set of unsaturated cyclic orders in $P_n$. We have
\begin{eqnarray*}
r!(n-r)!|\mathcal{F}| &=& \sum_{\sigma\in P_n}|\mathcal{F}_{\sigma}| \\
&\leq & \sum_{\sigma\in P_n}r-|U| \\
&= & r(n-1)!-|U|.
\end{eqnarray*}
This gives us $|U|\leq r(n-1)!-r!(n-r)!(1-\delta){n-1 \choose r-1}=r\delta(n-1)!$, implying that there are at least $(1-r\delta)(n-1)!$ saturated orders in $P_n$.

We now consider the Cayley graph $G$ defined above, with the vertex set being $P_n$ and the generating set being the set of adjacent transpositions $A=\{(12),\ldots,(n-2\textrm{ }n-1)\}$. Suppose $S$ is a subset of saturated cyclic orders. We can use the expansion property of $G$ to conclude that if $n^3r\delta\leq \frac{|S|}{(n-1)!}\leq \frac{1}{2}$, we get $N(S)>|S|/n^3\geq r\delta(n-1)!$. This means that there is a saturated cyclic order in $N(S)$. We will use this observation to show that the subgraph of $G$ induced by the set of all saturated cyclic orders, say $H$, has a large component. Consider the set of all components in $H$. Now a component in $H$ can be either \textit{small}, i.e. have size at most $n^3r\delta(n-1)!$ or be \textit{large}, i.e. have size bigger than $(n-1)!/2$. Clearly there can be at most one large component. We argue that the total size of all small components is at most $n^3r\delta(n-1)!$. Suppose not. Let $S'$ be the union of (at least $2$) small components such that $n^3r\delta(n-1)!\leq |S'|\leq 2n^3r\delta(n-1)!\leq (n-1)!/2$. Now using the above observation, $N_H(S')$ is non-empty, a contradiction. Thus there is a large component of size at least $(1-n^3r\delta)(n-1)!$. Call this component $H'$. Suppose $\sigma$ is a $v$-saturated cyclic order in $H'$. By Lemma \ref{lem3}, every cyclic order in $H'$ is $v$-saturated. Thus, $r!(n-r)!|\mathcal{F}(v)|\geq \sum_{\sigma\in H'}|\mathcal{F}_{\sigma}|\geq r(1-r\delta-n^3r\delta)(n-1)!$, which gives $|\mathcal{F}(v)|\geq (1-\frac{\epsilon}{2n}){n-1 \choose r-1}$, since $\delta = \dfrac{\epsilon}{2rn(n^3+1)}$.
\end{proof}

\textbf{Remark:} The proof of Theorem \ref{thm1} also contains a proof of the structural uniqueness of the extremal configurations for Theorem \ref{frankwise} when $r<(k-1)n/k$. This can be easily observed by putting $\epsilon=0$ in the statement of the theorem, or by just using Lemmas \ref{lem1}, \ref{lem2} and \ref{lem3}. We note that the original proof by Frankl in \cite{fr} did not include this structural information. However in \cite{shift}, Frankl gives another proof of Theorem \ref{frankwise} using the Kruskal-Katona theorem, which includes the characterization of the extremal structures for $r\leq (k-1)n/k$ when $k\geq 3$ and $r<(k-1)n/k$ when $k=2$. An alternate proof of this characterization is also given by Mubayi and Verstraete \cite{mubver}.
\section*{Acknowledgement}
The author wishes to thank Glenn Hurlbert for help in the writing of this paper, Andrzej Czygrinow for the many productive discussions on stability analysis and the anonymous referee for several helpful suggestions.

\end{document}